\newtheorem{theorem}{Theorem}[section]
\newtheorem{lemma}[theorem]{Lemma}
\theoremstyle{definition}
\newtheorem{remark}[theorem]{Remark}
\newtheorem{example}[theorem]{Example}
\title{\textbf{A new class of bi-transversal matroids}}
\author{Mahdi Ebrahimi\footnote{ m.ebrahimi.math@ipm.ir \& m.ebrahimi.math@gmail.com}
 \\
 Declarations of interest: none\\
 {\small\em  School of Mathematics, Institute for Research in Fundamental Sciences (IPM)},\\{\small\em P.O. Box: 19395-5746, Tehran, Iran}\footnote{This research was supported by a grant from IPM.}}
\date{}
\begin{document}

\maketitle

%\fontsize{22}{23}\selectfont
%\baselineskip=12mm
\begin{abstract}
A transversal matroid whose dual is also transversal is called bi-transversal. Let $G$ be an undirected graph with vertex set $V$. In this paper, for every subset $W$ of $V$, we associate a bi-transversal matroid to the pair $(G,W)$. We also derive an explicit formula for counting bases of this matroid.

 \end{abstract}
\noindent {\bf{Keywords:}}  Transversal matroid, Graph, Orientation. \\
\noindent {\bf AMS Subject Classification Number:}  05B35, 05C78, 05C20.

\section{Introduction}
$\indent$ The terminology used here for matroids will in general follow \cite{oxley}. For a matroid $M$ with ground set $E$, the dual of $M$ and the deletion of $X\subseteq E$ from $M$ are denoted by $M^\ast$ and $M\backslash X$, respectively. Transversal matroids are an important class of matroids discovered by edmonds and fulkerson \cite{A}. Transversal matroids that are also co-transversal are called \textit{bi-transversal} \cite{bi-transversal}. The characterization of bi-transversal matroids is an interesting problem originally posed by Welsh \cite{welsh}.

 Las Vergans \cite{vergans} proved that every principal transversal matroid is bi-transversal. The class of lattice path matroids defined by Stanley \cite{stanley} is another example of bi-transversal matroids \cite{lattice}. In this paper, we wish to present a new class of bi-transversal matroids. For this purpose, we require some concepts from graph theory.

Let $G=(V,E)$ be an undirected graph with vertex set $V$ and edge set $E$. The degree $\mathrm{deg}_G(v)$ of a vertex $v \in V$ is the number of edges that are incident to the vertex $v$; when $G$ is a multi-graph, a loop contributes $2$ to a vertex's degree, for the two ends of the edge. Let $\infty$ be a symbol with $\infty\notin V$.
 Now suppose $\alpha=(m(v),\,v\in V)$ be a vector of non-negative integers.
  \textit{A labeling of $G$ with respect to $\alpha$} is a map
$\varphi_\alpha$ from $E$ into $V\cup \{\infty\}$
satisfying:\\
a) For every $e\in E$ with ends $v$ and $w$, we have $\varphi_\alpha(e)\in \{v,w,\infty\}$, and\\
b) $|\varphi_\alpha^{-1}(v)|\leqslant m(v)$, for each $v\in V$.\\
We define the \textit{height} of $\varphi_\alpha$ as $\mathrm{Hei}(\varphi_\alpha):=|\{e\in E|\,\varphi_\alpha(e)\neq \infty\}|$.
For instance, in Figure \ref{labeling}, we can see  two labeling of graphs with respect to given vectors. In each case, the $i$th entry of the vector $\alpha$ or $\beta$ shows $m(i)$. Note that for the graph (1), $\mathrm{Hei}(\varphi_\alpha)=10$ and for the graph (2) $\mathrm{Hei}(\varphi_\beta)=6$.
\begin{figure}[th!]
\vspace{-0.2cm}
\begin{center}
\includegraphics[width=15cm]{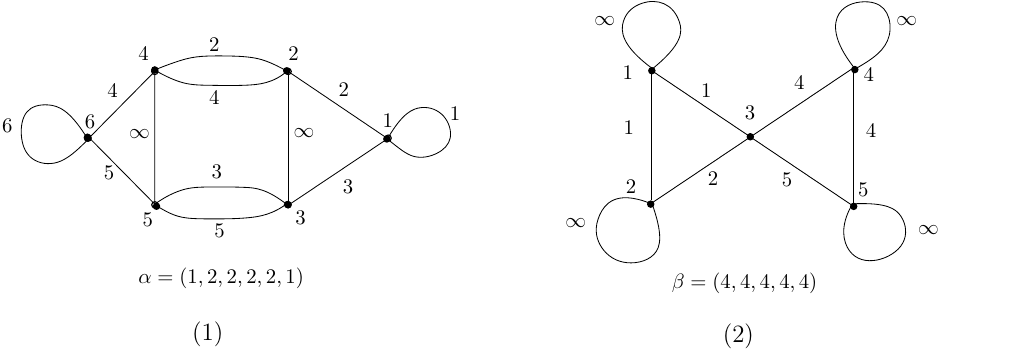}
\label{labeling}
\vspace{-0.4cm}
\caption{labeling of graphs.}
\end{center}
\end{figure}

Let $S:=\{(v,i)\in V\times \mathbb{N}|\, 1\leqslant i\leqslant \mathrm{deg}_G(v)\}$. For a subset $X$ of $S$, the
 \textit{$\alpha$-vector} of $X$ is the vector $\alpha(X):=(m_X(v),\,v\in V)$, where for every
  $v\in V$,
   $m_X(v):=|\{i|\,(v,i)\in X\}|$.
   Also we say that $X$ is a \textit{perfect subset} of $S$,
    if for some $W\subseteq V$, we have
    $X= \{(v,i)\in W\times \mathbb{N}|\, 1\leqslant i\leqslant \mathrm{deg}_G(v)\}$. In this case, we use the notation $S(W)$ for the perfect set $X$.
For example, for the graph (1) in Figure \ref{labeling}, the $\alpha$-vector of the set $X:=\{(1,1), (1,2), (1,3), (1,4), (3,1), (3,3), (4,4), (5,4), (6,1), (6,2), (6,3)\}$ is $\alpha(X)=(4,0,2,1,1,3)$. Now we are ready to state our main result.
\begin{theorem}\label{main}
Suppose $G=(V,E)$ is an undirected graph and $S:=\{(v,i)\in V\times \mathbb{N}|\, 1\leqslant i\leqslant \mathrm{deg}_G(v)\}$. Assume that $r$ is a function from $2^{S}$ into $\mathbb{Z}$ defined by $r(X):=max \{\mathrm{Hei}(\varphi)|\,\varphi\,\text{is a labeling of } G \text{ with respect to } \alpha(X)\}$.\\
\textbf{a)} $M=(S,r)$ is an identically self dual transversal matroid of rank $|E|$ with rank function $r$.\\
\textbf{b)} For every perfect subset $X$ of $S$, $M\backslash X$ is a by transversal matroid.
\end{theorem}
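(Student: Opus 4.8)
The plan is to reduce the whole statement to the single assertion that the contraction $M/X$ is transversal, and then to verify that assertion by computing its rank function explicitly and recognising it as the rank function of a concrete transversal presentation. Since part (a) gives that $M$ is transversal, and the restriction of a transversal presentation to a subset of the ground set (replacing each set $A_e$ by $A_e\setminus X$) presents the corresponding deletion, $M\backslash X$ is transversal for \emph{every} $X$. Hence $M\backslash X$ is bi-transversal as soon as its dual is transversal. Here I would invoke the standard duality identity $(M\backslash X)^\ast = M^\ast/X$ together with the identical self-duality $M=M^\ast$ from part (a) to obtain $(M\backslash X)^\ast = M/X$. Thus the entire content of (b) is the claim that $M/S(W)$ is transversal.

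Next I would compute the rank function of $M/X$ for $X=S(W)$. Write $E_W$ for the set of edges having at least one end in $W$, and let $G'=G-W$ be the subgraph induced on $V\setminus W$, with edge set $E\setminus E_W$. The first observation—and this is exactly where perfectness of $X$ enters—is that $\alpha(S(W))$ assigns capacity $\deg_G(v)$ to each $v\in W$, which is at least the number of edges incident to $v$; so the capacity constraints at the vertices of $W$ are vacuous, every edge of $E_W$ can be labelled at a $W$-end, and $r_M(S(W))=|E_W|$. The same decoupling shows that for $Y\subseteq S\setminus X = S(V\setminus W)$ one has $r_M(Y\cup X)=|E_W|+r'(Y)$, where $r'(Y)$ is the maximum number of edges of $G'$ that can be labelled subject to the capacities $m_Y(u)$ at the vertices $u\in V\setminus W$. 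Subtracting, $r_{M/X}(Y)=r_M(Y\cup X)-r_M(X)=r'(Y)$.

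Finally I would recognise $r'$ as the rank function of an explicit transversal matroid on the ground set $S(V\setminus W)$: take the presentation indexed by the edges of $G'$, assigning to each edge $e$ of $G'$ the set $A_e$ of all $(u,i)\in S(V\setminus W)$ for which $u$ is an end of $e$. A matching in the associated bipartite graph that saturates a subset $Y$ is precisely a labelling of the edges of $G'$ into distinct slots of $Y$ at their ends, so the rank of $Y$ in this transversal matroid equals $r'(Y)$. Hence $M/X$ is transversal, and $M\backslash X$ is bi-transversal.

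I expect the main obstacle to be the additivity identity $r_M(Y\cup X)=|E_W|+r'(Y)$. The clean way to prove it is a rerouting argument: given any labelling, reroute every labelled $E_W$-edge that sits at a $V\setminus W$-end to one of its $W$-ends instead. Perfectness guarantees a surplus of $W$-capacity, so this rerouting is always possible and never decreases the height; afterwards the labelled $E_W$-edges occupy only $W$-capacity while the labelled $G'$-edges occupy only the capacity recorded by $Y$, which forces the height to be at most $|E_W|+r'(Y)$, matching the obvious lower bound. This is the step that genuinely requires $X$ to be perfect: for a non-perfect $X$ the $W$-capacities could bind, the two edge-classes would compete for capacity, the rerouting would fail, and $M/X$ need not be transversal.
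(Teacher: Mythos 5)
There is a genuine gap: your proposal proves only part (b) of the theorem, while taking all of part (a) as given. You write ``since part (a) gives that $M$ is transversal'' and then invoke the identical self-duality $M=M^\ast$ to convert $(M\backslash X)^\ast=M^\ast/X$ into $M/X$ --- but part (a) is itself a claim of the statement, and its proof is the substantive half of the theorem. In particular you never show that the combinatorially defined function $r$ (maximum height of a labelling with respect to $\alpha(X)$) is the rank function of a matroid at all; the paper does this by exhibiting the presentation $\mathcal{A}=(A(e),\,e\in E)$ with $A(e)=\{(x,i): x\in\{v,w\},\,1\le i\le\deg_G(x)\}$ and checking $r=r_{\mathcal{A}}$ in both directions. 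Nor do you prove the rank-$|E|$ claim or, most importantly, the identical self-duality, which the paper establishes by an orientation argument: a basis $B'$ determines an orientation $\mathcal{O}_2$ of $G$ in which each edge points toward the vertex whose slot it uses, and the tails of $\mathcal{O}_2$ then yield a full-height labelling with respect to $\alpha(S\backslash B')$, so the complement of a basis is a basis. Without this, your derivation of (b) is circular relative to the full statement.

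The part of the argument you do give for (b) is correct and close in spirit to the paper's. The paper also reduces to showing $M/X$ transversal via $(M\backslash X)^\ast=M/X$, computes $r(X)=|E_2|$ by orienting every edge meeting $W$ with tail in $W$ (your observation that perfectness makes the $W$-capacities vacuous is exactly what makes this work), and then identifies $M/X$ with $M[\mathcal{A}']$ for $\mathcal{A}'=(A(e),\,e\in E_1)$, $E_1$ the edge set of $G[V\backslash W]$. The difference is that the paper's Lemma~\ref{contract} matches bases of $M/X$ with bases of $M[\mathcal{A}']$ using Oxley's Proposition~3.1.7 (a set is a basis of $M/X$ iff its union with a basis of $X$ is a basis of $M$), whereas you match rank functions via the additivity identity $r_M(Y\cup X)=|E_2|+r'(Y)$ proved by rerouting; both are viable, and your rerouting argument is sound as stated. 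To complete the proposal you would need to supply part (a) in full, since everything else leans on it.
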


\begin{remark}
Let $G=(V,E)$ be an undirected graph it is clear that the matroid described in Theorem \ref{main} (a) only depends on the structure of $G$. So we use the notation $\mathrm{TM}(G)$ for this matroid. Also we denote the ground set and the rank function of $\mathrm{TM}(G)$ by  $S_G$ and $r_G$, respectively. Now let $X\subseteq S_G$ be a perfect set. We use the notation $\mathrm{TM}(G,W)$ for the matroid $\mathrm{TM}(G)\backslash X$, where  $W$ is a subset of $ V$ such that $X=S_G(W)$.
\end{remark}

Assume that $M$ is a matroid. We say that $M$ is a \textit{graphical transversal matroid}, if there exist an undirected graph $G=(V,E)$ and a subset (possibly empty) $W$ of $V$ such that $M\cong \mathrm{TM}(G, W)$ or $M\cong \mathrm{TM}(G, W)^\ast$. In the sequel, we wish to derive an explicit formula for counting bases of graphical transversal matroids.

Let $G=(V,E)$ be an undirected graph with $V=\{v_1,v_2,\dots, v_n\}$. An \textit{orientation} $\mathcal{O}$ of $G$ is an assignment of a direction to each edge in $E$, turning the initial graph $G$ into a directed graph $\mathcal{O}(G)$. In $\mathcal{O}(G)$, a directed edge $e$ from $v$ to $w$ is called an \textit{arc} of the directed graph $\mathcal{O}(G)$ denoted by $\mathcal{O}(e)=(v,w)$. For the arc $\mathcal{O}(e)=(v,w)$, the vertex $w$ is called the \textit{head} $h(\mathcal{O}(e))$ and $v$ is called the \textit{tail}  $t(\mathcal{O}(e))$ of the arc $\mathcal{O}(e)$. For a vertex $v\in V$, the number of arcs with head $v$ is called the \textit{in-degree} $\mathrm{deg}_\mathcal{O}^-(v)$ of $v$ and the number of arcs with tail $v$, the \textit{out-degree} $\mathrm{deg}_\mathcal{O}^+(v)$ of $v$.
We define an equivalence relation $\sim$ on the set of orientations on $G$ by $\mathcal{O}_1\sim \mathcal{O}_2$, if and only if $(\mathrm{deg}_{\mathcal{O}_1}^+(v_1),\mathrm{deg}_{\mathcal{O}_1}^+(v_2),\dots,\mathrm{deg}_{\mathcal{O}_1}^+(v_n))=(\mathrm{deg}_{\mathcal{O}_2}^+(v_1),\mathrm{deg}_{\mathcal{O}_2}^+(v_2),\dots,\mathrm{deg}_{\mathcal{O}_2}^+(v_n))$. We use the notation $D^+(G)$ for the set of all equivalence classes of $\sim$. Also for every orientation $\mathcal{O}$ of $G$, we denote by $(\mathrm{deg}_{\mathcal{O}}^+(v_1),\mathrm{deg}_{\mathcal{O}}^+(v_2),\dots,\mathrm{deg}_{\mathcal{O}}^+(v_n))$, the equivalence class of $\sim$ containing $\mathcal{O}$ (See example \ref{exam}).

\begin{theorem}\label{graphical}
Suppose $G=(V,E)$ is an undirected graph and $W\subseteq V$ with $V\backslash W=\{v_1,v_2,\dots, v_n\}$. Then the number of bases of $\mathrm{TM}(G,W)$ is equal to
$$\sum_{(a_1,a_2,\dots,a_n)\in D^+(G[V\backslash W])}\prod_{i=1}^n\binom{\mathrm{deg}_G(v_i)}{a_i},$$
where $G[V\backslash W]$ is the induced subgraph of $G$ on $V\backslash W$.
\end{theorem}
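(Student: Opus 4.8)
The plan is to reduce the count to a sum over the possible $\alpha$-vectors of bases, and then to identify these $\alpha$-vectors with out-degree sequences of $G[V\backslash W]$. Write $U:=V\backslash W$, so that by the Remark the ground set of $\mathrm{TM}(G,W)=\mathrm{TM}(G)\backslash S_G(W)$ is $\{(v,i):v\in U,\ 1\le i\le \deg_G(v)\}$, and for any subset $Y$ of it the rank equals $r_G(Y)$. First I would observe that, since $r_G(Y)=\max_\varphi \mathrm{Hei}(\varphi)$ depends on $Y$ only through $\alpha(Y)$, both independence ($r_G(Y)=|Y|$) and being a basis depend only on the $\alpha$-vector $\alpha(Y)=(m_Y(v))_{v\in U}$. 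Consequently, calling a vector $(m(v))_{v\in U}$ a \emph{basis vector} when some (equivalently every) subset with that $\alpha$-vector is a basis, I get
\[
\#\{\text{bases of }\mathrm{TM}(G,W)\}=\sum_{(m(v))\ \text{basis vector}}\ \prod_{v\in U}\binom{\deg_G(v)}{m(v)},
\]
because there are exactly $\prod_{v\in U}\binom{\deg_G(v)}{m(v)}$ subsets with a given $\alpha$-vector (choosing, at each vertex independently, which of its $\deg_G(v)$ slots lie in the subset).

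Next I would pin down the basis vectors. A set $B$ is independent iff there is a labeling $\varphi$ with respect to $\alpha(B)$ of height $|B|=\sum_v m_B(v)$; since $\mathrm{Hei}(\varphi)=\sum_v|\varphi^{-1}(v)|\le \sum_v m_B(v)$ under the constraint $|\varphi^{-1}(v)|\le m_B(v)$, equality forces $|\varphi^{-1}(v)|=m_B(v)$ for all $v$. A basis moreover has maximum size $\rho:=r_G$ of the whole ground set, which is readily computed to be the number of edges incident with $U$; thus every assignable edge (every edge with an end in $U$) receives a non-$\infty$ label. Splitting these edges into $E_1$, the edges of $G[U]$, and $E_2$, the edges joining $U$ to $W$, each edge of $E_2$ is forced onto its unique end in $U$, while each edge of $E_1$ is labeled by one of its two ends — exactly the data of an orientation $\mathcal O$ of $G[U]$ (label $=$ head). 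Writing $d_2(v)$ for the number of $E_2$-edges at $v$, the load is $m(v)=\deg^-_{\mathcal O}(v)+d_2(v)$, and conversely every orientation gives a height-$\rho$ labeling, hence a basis. So the basis vectors are precisely $m(v)=\deg^-_{\mathcal O}(v)+d_2(v)$ as $\mathcal O$ ranges over orientations of $G[U]$, two orientations yielding the same basis vector iff they share the same in-degree sequence.

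Finally I would run the combinatorial identity. Reindexing the displayed sum over distinct in-degree sequences $b=(\deg^-_{\mathcal O}(v))_{v\in U}$ and using $\binom{N}{k}=\binom{N}{N-k}$ together with $\deg_G(v)=\deg_{G[U]}(v)+d_2(v)$ and $\deg_{G[U]}(v)-\deg^-_{\mathcal O}(v)=\deg^+_{\mathcal O}(v)$, each factor transforms as
\[
\binom{\deg_G(v)}{b(v)+d_2(v)}=\binom{\deg_G(v)}{\deg_G(v)-b(v)-d_2(v)}=\binom{\deg_G(v)}{\deg^+_{\mathcal O}(v)}.
\]
Since reversing all arcs is a bijection carrying the set of in-degree sequences onto the set of out-degree sequences — both equal to $D^+(G[U])$ — reindexing by the out-degree sequence $(a_i)=(\deg^+_{\mathcal O}(v_i))$ turns the sum into $\sum_{(a_1,\dots,a_n)\in D^+(G[U])}\prod_{i=1}^n\binom{\deg_G(v_i)}{a_i}$, which is the claimed formula. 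The main obstacle I anticipate is the second step: rigorously showing that basis-hood is an invariant of the $\alpha$-vector and that maximum-height labelings correspond exactly to orientations of $G[U]$, taking due care of the forced $E_2$-edges, of the degree bookkeeping, and of loops under the convention that a loop adds $1$ to each of $\deg^+_{\mathcal O}$ and $\deg^-_{\mathcal O}$ and $2$ to $\deg_G$. Once that dictionary is in place, the binomial symmetry makes the counting automatic.
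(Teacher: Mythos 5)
Your proof is correct, but it takes a genuinely different route from the paper's. The paper passes to the dual: since a matroid and its dual have the same number of bases, and since Lemma \ref{contract} (already established inside the proof of Theorem \ref{main}(b)) identifies $\mathrm{TM}(G,W)^\ast$ with the transversal matroid presented by $(A(e),\ e\in E_1)$, $E_1$ the edge set of $G[V\backslash W]$, it suffices to count bases of that presentation; each one arises by fixing an orientation $\mathcal{O}$ of $G[V\backslash W]$ and choosing $\deg^+_{\mathcal O}(v)$ of the $\deg_G(v)$ slots at each vertex, and grouping orientations by out-degree sequence gives the formula at once. You instead stay in the primal matroid: you note that basis-hood depends only on the $\alpha$-vector, characterize the basis vectors as $m(v)=\deg^-_{\mathcal O}(v)+d_2(v)$ by analyzing maximum-height labelings, and then reach the stated out-degree form via $\binom{n}{k}=\binom{n}{n-k}$ --- in effect replacing the matroid-duality step by binomial complementation. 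Both arguments rest on the same dictionary between orientations of $G[V\backslash W]$ and slot choices; the paper's version is shorter because Lemma \ref{contract} carries the load, while yours is self-contained at the level of the rank function and yields, as a by-product, an explicit description of the bases of $\mathrm{TM}(G,W)$ itself rather than of its dual. Two small points deserve an explicit line in a final write-up: the computation of $\rho=r_G(S_G\backslash S_G(W))$ as the number of edges meeting $V\backslash W$ (the capacity constraint is vacuous at vertices of $V\backslash W$ and zero at vertices of $W$), and the check that $\sum_{v}\bigl(\deg^-_{\mathcal O}(v)+d_2(v)\bigr)=\rho$, so that every set realizing such a vector has both full size and full rank; both are routine with your stated loop conventions.
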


%%%%%%%%%%%%%%%%%%%%%%%%%%%%%
\section{Graphical transversal matroids}
Recall that a set system $\mathcal{A}$ on a set $E$ is a multi-set of subsets of $E$. It is convenient to write $\mathcal{A}$ as $(A_1, A_2, \dots, A_n)$ or $(A_i,\, i\in [n]:=\{1,2,\dots,n\})$ with the understanding that $(A_{\tau(1)},A_{\tau(2)},\dots,A_{\tau(n)})$, where $\tau$ is any permutation of $[n]$, is the same set system. A \textit{partial transversal} of $\mathcal{A}$ is a subset $I$ of $E$ for which there is an injection $f:I\rightarrow [n]$ with $i\in A_{f(i)}$ for all $i\in I$.
 Edmonds and Fulkerson \cite{A} showed that the partial transversals of a set system $\mathcal{A}$ on $E$ are the independent sets of a matroid on $E$; we say that $\mathcal{A}$  is a \textit{presentation} of this \textit{transversal matroid} $M[\mathcal{A}]$.\\
 Assume that $M$ is a matroid with ground set $E$. For a subset $X\subseteq E$, the \textit{contraction} of $X$ from  $M$ is defined  by $M/X:=(M^\ast\backslash X)^\ast$.
 Now we are ready to prove our main results on the structure of graphical transversal matroids.\\

\noindent\textbf{Proof of Theorem \ref{main}: } For every $e\in E$ with ends $v$ and $w$, suppose $A(e):=\{(x,i)\in S|\,x\in \{v,w\},\, 1\leq i\leq \mathrm{deg}_G(x)\}$.\\
\textbf{ a)}
 Consider the set system $\mathcal{A}:=(A(e),\, e\in E)$ and let $r_{\mathcal{A}}$ be the rank function of the transversal matroid $M[\mathcal{A}]$.
  We first show that $M=M[\mathcal{A}]$ and $r=r_{\mathcal{A}}$.
   Assume that $X\subseteq S$ and  $B=\{b_1, b_2,\dots, b_k\}$ is a basis of $X$ in $M[\mathcal{A}]$.
   Since $B$ is a partial transversal, there exist distinct edges $e_1, e_2,\dots,e_k \in E$ so that for every $1\leq j\leq k$, we have $b_j\in A(e_j)$.
  Also for every $1\leq j\leq k$, there exist $v_j\in V$ and $1\leq i_j \leq \mathrm{deg}_G(v_j)$ such that $b_j =(v_j,i_j)$.
    Define the map $\varphi_{\alpha(X)}$ from $E$ into $V \cup \{\infty\}$ by \\
\begin{equation*}
\varphi_{\alpha(X)}(e):=\left\{
\begin{array}{rl}
v_j& if\, e=e_j,\, for\, some\, 1\leq j\leq k\\
\infty& if\, e\notin \{e_1, e_2,\dots,e_k\}.
\end{array}\right.
\end{equation*}
Clearly, $\varphi_{\alpha(X)}$ is a labeling of $G$ with respect to $\alpha(X)$ such that $r(X)\geq\mathrm{Hei}(\varphi_{\alpha(X)})=k$.
We claim that $r(X)=\mathrm{Hei}(\varphi_{\alpha(X)})=k$. On the contrary, suppose $r(X)>\mathrm{Hei}(\varphi_{\alpha(X)})$.
Then there exists a labeling $\bar{\varphi}_{\alpha(X)}$ with respect to $\alpha(X)$ such that $m:=r(X)=\mathrm{Hei}(\bar{\varphi}_{\alpha(X)})$.
Let $\{e\in E|\, \bar{\varphi}_{\alpha(X)} (e)\neq \infty\}=\{e_1,e_2,\dots, e_m\}$ and $\{\bar{\varphi}_{\alpha(X)} (e_1), \bar{\varphi}_{\alpha(X)} (e_2),\dots, \bar{\varphi}_{\alpha(X)} (e_m)\}=\{v_1,v_2,\dots,v_t\}$, for some positive integer $t$.
For every $1\leq i\leq t$, we set $F_i:=\{j\in \mathbb{N}|\, 1\leq j\leq m,\,  \bar{\varphi}_{\alpha(X)} (e_j)=v_i\}$.
Then it is easy to see that $\{(v_i,j)|\,1\leq i\leq t,\, 1\leq j \leq |F_i|\}$ is an independent set of $X$. It is a contradiction with this fact that $r_\mathcal{A}(X)=k$.
Hence  $M=M[\mathcal{A}]$ is a transversal matroid with rank function $r=r_\mathcal{A}$. Consider a fixed orientation $\mathcal{O}_1$ on $G$.
 With respect to this orientation, we define $\varphi^\prime_{\alpha(S)}$ from $E$ into $V \cup \{\infty\}$ by $\varphi^\prime_{\alpha(S)}(e):=t(\mathcal{O}_1(e))$.
 Clearly, $\varphi^\prime_{\alpha(S)}$ is a labeling of $G$ with respect to $\alpha(S)$.
 Thus $M$ is a matroid of rank $|E|$. Now we show that $M$ is identically self dual. For this goal, suppose $B^\prime=\{b_e|\, e\in E\}$ is a basis of $M$. Then for every $e\in E$, there exist $v_e\in V$ and $1\leq i_e \leq \mathrm{deg}_G(v_e)$ so that $b_e=(v_e, i_e)$. Also there exists a permutation $\tau$ on the set $E$ such that for every $e\in E$, we have $b_{\tau(e)}=(v_{\tau(e)}, i_{\tau(e)})\in A(e)$.
  We can consider an orientation $\mathcal{O}_2$ of $G$ such that for every $e\in E$, the vertex $v_{\tau(e)}$ is the head of $\mathcal{O}_2(e)$. Now using the orientation $\mathcal{O}_2$, we define $\varphi_{\alpha(S\backslash B^\prime)}$ from $E$ into $V \cup \{\infty\}$ by $\varphi_{\alpha(S\backslash B^\prime)}(e):=t(\mathcal{O}_2(e))$. It is easy to see that  $\varphi_{\alpha(S\backslash B^\prime)}$ is a labeling of $G$ with respect to $\alpha(S\backslash B^\prime)$.
  Therefore $r(S\backslash B^\prime)=|E|$. Also as the some of degrees of vertices of $G$ is $2|E|$, we have $|S\backslash B^\prime|=|B^\prime|$. Hence $S\backslash B^\prime$ is a basis of $M$ and it completes the proof. \\
 \textbf{ b)} It is clear that $M\backslash X$ is transversal. Since $M$ is identically self dual, we have $(M\backslash X)^\ast=M/X$. Thus it suffices to show that $M/X$ is transversal. Let $E_1$ be the edge set of $G[V\backslash W]$, where $W$ is a subset of $V$ such that $X=S(W)$. Set $E_2:=E\backslash E_1$. Now we show that $r(X)=|E_2|$. For this goal, we consider an orientation $\mathcal{O}$ on $G$ such that for every $e\in E_2$, the tail of $e$ is in $W$. We define the map $\varphi_{\alpha(X)} $ from $E$ into $V\cup\{\infty\}$ by \\
 \begin{equation*}
  \varphi_{\alpha(X)}(e):=\left\{
\begin{array}{rl}
t(\mathcal{O}(e))& if\, e\in E_2\\
\infty& if\, e\notin E_2.
\end{array}\right.
  \end{equation*}
  Obviously, $\varphi_{\alpha(X)}$ is a labeling of $G$ with respect to $\alpha(X)$. Hence $r(X)=\mathrm{Hei}(\varphi_{\alpha(X)})=|E_2|$. Thus as $r(M)=|E|$, \cite[Proposition 3.1.6]{oxley} implies that $M/X$ is a matroid of rank $|E_1|$. Now we consider the set system $\mathcal{A^\prime}:=(A(e),\, e\in E_1)$.
  \begin{lemma}\label{contract}
  $M/X=M[\mathcal{A}^\prime]$.
  \end{lemma}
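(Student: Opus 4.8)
The plan is to show that $M/X$ and $M[\mathcal{A}']$ have the same ground set and the same rank function; since a matroid is determined by its rank function, this gives the lemma. Both matroids live on $S\backslash X=\{(v,i)\mid v\in V\backslash W,\ 1\leq i\leq \mathrm{deg}_G(v)\}$: for $M/X$ this is clear, and for $M[\mathcal{A}']$ note that each $A(e)$ with $e\in E_1$ is contained in $S\backslash X$ because both ends of $e$ lie in $V\backslash W$. Write $G_1:=G[V\backslash W]$. So it suffices to verify $r_{M/X}(Y)=r_{\mathcal{A}'}(Y)$ for every $Y\subseteq S\backslash X$, where $r_{\mathcal{A}'}$ denotes the rank function of $M[\mathcal{A}']$.

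First I would record two reductions. By the standard contraction rank formula \cite[Proposition 3.1.6]{oxley} together with the equality $r(X)=|E_2|$ already established above, we have $r_{M/X}(Y)=r(Y\cup X)-r(X)=r(Y\cup X)-|E_2|$. Second, repeating the argument of part (a) with $G$ and $\mathcal{A}$ replaced by $G_1$ and $\mathcal{A}'$ shows that $r_{\mathcal{A}'}(Y)=\max\{\mathrm{Hei}(\psi)\mid \psi\text{ is a labeling of }G_1\text{ with respect to }\alpha(Y)\}$. Thus the whole lemma reduces to the single identity
$$r(Y\cup X)=|E_2|+r_{\mathcal{A}'}(Y).$$

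The heart of the proof is this identity, which I would prove by two inequalities, using that every edge of $E_2$ has an end in $W$ while every edge of $E_1$ has both ends in $V\backslash W$. For ``$\geq$'', take an optimal labeling $\psi$ of $G_1$ with respect to $\alpha(Y)$ and extend it to a map $\varphi$ on $E$ by setting $\varphi(e)=\psi(e)$ for $e\in E_1$ and sending each $e\in E_2$ to one of its ends lying in $W$. Condition (a) is immediate; condition (b) holds because for $v\in W$ the fibre $\varphi^{-1}(v)$ consists of edges incident to $v$, so $|\varphi^{-1}(v)|\leq \mathrm{deg}_G(v)=m_{Y\cup X}(v)$, while for $v\in V\backslash W$ we have $\varphi^{-1}(v)=\psi^{-1}(v)$, whose size is at most $m_Y(v)=m_{Y\cup X}(v)$. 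This $\varphi$ has height $|E_2|+\mathrm{Hei}(\psi)=|E_2|+r_{\mathcal{A}'}(Y)$. For ``$\leq$'', restrict an arbitrary labeling $\varphi$ of $G$ with respect to $\alpha(Y\cup X)$ to the edges of $E_1$: since those edges only take values in $(V\backslash W)\cup\{\infty\}$, the restriction is a labeling of $G_1$ with respect to $\alpha(Y)$, and dropping the at most $|E_2|$ non-$\infty$ values coming from $E_2$ gives $\mathrm{Hei}(\varphi)\leq|E_2|+r_{\mathcal{A}'}(Y)$.

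I expect the main obstacle to be the ``$\geq$'' direction, specifically the bookkeeping showing that routing all of $E_2$ into $W$ genuinely respects constraint (b) at the vertices of $W$ while leaving the capacities $m_Y(v)$ at vertices of $V\backslash W$ untouched; this is precisely where the perfectness of $X$ (which forces $m_{Y\cup X}(v)=\mathrm{deg}_G(v)$ for $v\in W$) and the split of $E$ into $E_1$ and $E_2$ are used. Once the displayed identity is available, substituting it into $r_{M/X}(Y)=r(Y\cup X)-|E_2|$ yields $r_{M/X}(Y)=r_{\mathcal{A}'}(Y)$ for every $Y\subseteq S\backslash X$, and therefore $M/X=M[\mathcal{A}']$, as claimed.
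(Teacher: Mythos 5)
Your proposal is correct, and it reaches the lemma by a genuinely different route than the paper. The paper argues at the level of bases: it first checks that both $M/X$ and $M[\mathcal{A}^\prime]$ have rank $|E_1|$, and then, using \cite[Proposition 3.1.7]{oxley} (a set $B$ is a basis of $M/X$ if and only if $B\cup B_X$ is a basis of $M$ for a basis $B_X$ of $X$), it shows each basis of one matroid is a basis of the other by explicitly constructing orientations of $G$ that send the edges of $E_2$ into $W$. You instead compare the full rank functions: you reduce everything to the single identity $r(Y\cup X)=|E_2|+r_{\mathcal{A}^\prime}(Y)$, proved by extending an optimal labeling of $G[V\backslash W]$ across $E_2$ (routing each edge of $E_2$ to an end in $W$, which is legal precisely because the perfectness of $X$ forces $m_{Y\cup X}(v)=\mathrm{deg}_G(v)$ for $v\in W$) and, conversely, by restricting a labeling of $G$ to $E_1$. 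Combined with $r_{M/X}(Y)=r(Y\cup X)-r(X)$ and $r(X)=|E_2|$, this gives equality of rank functions, which is stronger information than equality of bases even though both suffice to identify the matroids. Your approach costs you a rerun of the part (a) correspondence between partial transversals and labelings for the induced subgraph (a harmless repetition, since only the counts $m_Y(v)$ matter there, not whether the indices range up to $\mathrm{deg}_G(v)$ or $\mathrm{deg}_{G[V\backslash W]}(v)$), but it buys a cleaner argument that avoids both the orientation bookkeeping and the appeal to \cite[Proposition 3.1.7]{oxley}. All the individual steps you flag as delicate do go through as you describe.
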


  \begin{proof}
  If we consider an orientation $\mathcal{O}_1$ of $G[V\backslash W]$, then it is easy to see that $B_{\mathcal{O}_1}=\{(v,i)\in (V\backslash W)\times \mathbb{N}|\,1\leq i\leq \mathrm{deg}^+_{{\mathcal{O}_1}}(v)\}$ is a basis of $M[\mathcal{A}^\prime]$.
   Thus $M[\mathcal{A}^\prime]$ is a matroid of rank $|E_1|$. Now let $B=\{b_e|\, e\in E_1\}$ be a basis of $M/X$.
   Then for every $e\in E_1$, there exist $v_e\in V\backslash W$ and $1\leq i_e \leq \mathrm{deg}_G(v_e)$ such that $b_e=(v_e, i_e)$.
   Since $B$ is a basis of $M/X$, using \cite[Proposition 3.1.7]{oxley}, we deduce that for some basis $B_X$ of $X$, the set $B\cup B_X$ is a basis of $M=M[\mathcal{A}]$.
Thus for some permutation $\tau$ on the set $E_1$, for every $e\in E_1$, we have $b_{\tau(e)}=(v_{\tau(e)}, i_{\tau(e)})\in A(e)$.
     Hence $B$ is a basis of $M[\mathcal{A}^\prime]$.
      conversely, suppose $B^\prime=\{b^\prime_e|\, e \in E_1\}$ is a basis of $M[\mathcal{A}^\prime]$ such that $b^\prime_e \in A(e)$, for every $e\in E_1$.
      Then for each $e\in E_1$ with ends $v$ and $w$, there exist $v^\prime_e\in \{v,w\}$ and $1\leq i^\prime_e \leq \mathrm{deg}_G(v^\prime_e)$ such that $b^\prime_e=(v^\prime_e, i^\prime_e)$.
      Now we consider an orientation $\mathcal{O}_2$ of $G$ such that for every $e\in E_1$ (resp. $e\in E_2$), $t(\mathcal{O}_2(e))=v^\prime_e$ (resp. $t(\mathcal{O}_2(e))\in W$).
       Then it is clear that $\{b^\prime_e=(t(\mathcal{O}_2(e)), i^\prime_e)|\, e\in E_1\}\cup\{(v,i)\in W\times \mathbb{N}|\, 1\leq i\leq \mathrm{deg}^+_{\mathcal{O}_2}(v)\}$ is a basis of $M$. Hence using \cite[Proposition 3.1.7]{oxley}, $B^\prime$ is a basis of $M/X$.
       Therefore $M/X=M[\mathcal{A}^\prime]$.
  \end{proof}
  This completes the proof.\qed\\

\noindent\textbf{Proof of Theorem \ref{graphical}: }
 Since the number of bases of $\mathrm{TM}(G,W)$ is equal to the number of bases of $\mathrm{TM}(G,W)^\ast$, it suffices to show that the number of bases of $\mathrm{TM}(G,W)^\ast$ is equal to
 $$\bar{b}:=\sum_{(a_1,a_2,\dots,a_n)\in D^+(G[V\backslash W])}\prod_{i=1}^n\binom{\mathrm{deg}_G(v_i)}{a_i}.$$
 Let $E_1$ be the edge set of $G[V\backslash W]$. For every $e\in E_1$ with ends $v$ and $w$, suppose $A(e):=\{(x,i)|\,x\in \{v,w\},\, 1\leq i\leq \mathrm{deg}_G(x)\}$
 and $\mathcal{A}=(A(e),\,e\in E_1)$. Then by Lemma \ref{contract}, $\mathrm{TM}(G,W)^\ast=M[\mathcal{A}]$.
 Suppose $\mathcal{O}$ is an orientation of $G[V\backslash W]$.
  For each $v\in V\backslash W$, assume that $\mathcal{O}(e_1), \mathcal{O}(e_2), \dots, \mathcal{O}(e_{d(v)})$ are precisely the arcs of $\mathcal{O}(G[V\backslash W])$ with tail $v$, where $d(v):=\mathrm{deg}_\mathcal{O}^+(v)$.
 If for every $v\in V\backslash W$, we choose $d(v)$ distinct elements $(v,i_1),(v,i_2),\dots,(v,i_{d(v)})$ from the set $\{(v,i)|\,1\leq i\leq \mathrm{deg}_G(v)\}$,
  then it is clear that for every $1\leq j\leq d(v)$,
  we have $(v,i_j)\in A(e_j)$ and
   $B_{\mathcal{O}}:=\{(v,i_j)\in (V\backslash W)\times \mathbb{N}|\, 1\leq j\leq d(v)\}$
    is a basis of $M[\mathcal{A}]$.
  Hence with respect to the  orientation $\mathcal{O}$, the matroid $M[\mathcal{A}]$ has $\prod_{i=1}^n\binom{\mathrm{deg}_G(v_i)}{\mathrm{deg}_\mathcal{O}^+(v_i)}$ bases of the form $B_\mathcal{O}$.
   We call a basis of the form $B_\mathcal{O}$, an $\mathcal{O}$-basis of $M[\mathcal{A}]$.
   Now suppose $B=\{b_e|\,e\in E_1\}$ is a basis of $M[\mathcal{A}]$ such that for every $e\in E_1$, we have $b_e=(v_e,i_e)\in A(e)$.
   If we consider an orientation $\mathcal{O}^\prime$ on $G[V\backslash W]$ so that for each $e\in E_1$, the vertex $v_e$ is the tail of $\mathcal{O}^\prime(e)$, then $B$ is an $\mathcal{O}$-basis of $M[\mathcal{A}]$. Therefore the number of basis of $M[\mathcal{A}]$ is equal to $\bar{b}$. This completes the proof.\qed

  We end this section by the following example.

%\begin{table}[hb]\label{tab}
\begin{center}
 
  \begin{tabular}{|c|c|c|c|c||c|c|c|c|c|}\hline
 N& $a$&$|a|$&$\bar{a}$ &$\tilde{a}$&N& $a$&$|a|$&$\bar{a}$ &$\tilde{a}$\\ \hline
 1& (0,1,2,3)& 1& 9 &500 & 20& (2,0,1,3) & 1 & 9&500 \\ \hline
 2& (0,1,3,2)&1&9& 500&21& (2,0,2,2) & 2 &27&1000\\ \hline
 3& (0,2,1,3)& 1&9&500 &22& (2,0,3,1) &1  &9&500\\ \hline
 4& (0,2,2,2)&2&27& 1000&23& (2,1,0,3) & 1 &9&500\\ \hline
 5& (0,2,3,1)& 1& 9&500&24& (2,1,1,2) & 4 &81&2500\\ \hline
 6& (0,3,1,2)&1&9&500&25&(2,1,2,1)  & 4 &81&2500\\ \hline
 7& (0,3,2,1)& 1& 9&500&26& (2,1,3,0) & 1 &9&500\\ \hline
 8& (1,0,2,3)&1&9&500& 27& (2,2,0,2) & 2 &27&1000\\ \hline
 9& (1,0,3,2)& 1& 9&500& 28& (2,2,1,1) & 4 &81&2500\\ \hline
 10& (1,1,1,3)&2&27&1250& 29&(2,2,2,0)  & 2 &27&1000\\ \hline
 11& (1,1,2,2)&4&81&2500&30& (2,3,0,1) & 1 &9&500\\ \hline
 12& (1,1,3,1)& 2&27&1250&31&(2,3,1,0)  & 1 &9&500\\ \hline
 13& (1,2,0,3)& 1& 9& 500&32& (3,0,1,2) & 1 &9&500\\ \hline
 14& (1,2,1,2)& 4& 81&2500& 33& (3,0,2,1) & 1 &9&500\\ \hline
 15& (1,2,2,1)& 4& 81&2500& 34& (3,1,0,2) & 1 &9&500\\ \hline
 16& (1,2,3,0)& 1& 9&500& 35& (3,1,1,1) & 2 &27&1250\\ \hline
 17& (1,3,0,2)& 1& 9&500& 36& (3,1,2,0) & 1 &9&500\\ \hline
 18& (1,3,1,1)& 2& 27&1250& 37& (3,2,0,1) & 1 &9&500\\ \hline
 19& (1,3,2,0)& 1& 9&500& 38& (3,2,1,0) & 1 &9&500\\ \hline
 \end{tabular}\\
Table 1: list of elements of $D^+(G)$
\end{center}
%  \vspace{2mm}
%\end{table}
 \begin{example}\label{exam}
\textbf{ a)} Assume that $G$ is a complete graph with vertex set $V_G=\{v_1,v_2,v_3,v_4\}$. For every $a=(a_1,a_2,a_3,a_4)\in D^+(G)$, we set $\bar{a}:=\prod_{i=1}^4\binom{\mathrm{deg}_G(v_i)}{a_i}$. There exist $64$ orientations on $G$. It is easy to see that $|D^+(G)|=38$ and the elements of $D^+(G)$ are precisely as in Table 1. Thus by Theorem \ref{graphical}, the number of bases of $\mathrm{TM}(G)$ is equal to $918$.\\
\textbf{ b)} Suppose $H$ is a complete graph with vertex set  $V_H=\{v_1,v_2,v_3,v_4,v_5, v_6\}$. Also let $W=\{v_5,v_6\}$. Then it is clear that $D^+(H[V\backslash W])=D^+(G)$. For every $a=(a_1,a_2,a_3,a_4)\in D^+(G)$, we set $\tilde{a}:=\prod_{i=1}^4\binom{\mathrm{deg}_H(v_i)}{a_i}$. Hence using Theorem \ref{graphical} and Table 1, the number of bases of $\mathrm{TM}(H,W)$ is equal to $36000$.
% \vspace{-2mm}
  \end{example}
%%%%%%%%%%%%%%%%%%%%%%%%%%%%%
\section*{Acknowledgements}
This research was supported in part
by a grant  from School of Mathematics, Institute for Research in Fundamental Sciences (IPM).

%%%%%%%%%%%%%%%%%%%%%%%%%%%%%%%

\end{document}